\documentclass[12pt]{amsart}
\usepackage{amsmath}
\usepackage[backref=page]{hyperref}
\usepackage{tikz,float,caption}



\usepackage[margin=1.25 in,top=1.2in, bottom=1.4 in]{geometry}

\usepackage[bb=ams,scr=euler]{mathalpha}

\usepackage{setspace}
\setstretch{1.0}

\usepackage{enumitem}
\setenumerate{
  wide=10pt,
  leftmargin=0pt,
  labelwidth=*,
  label=(\roman*),
  topsep=0pt,
  listparindent=0pt,
  parsep=4pt}

\usepackage{thmtools}
\declaretheoremstyle[headfont=\normalsize\normalfont\bfseries,notefont=\mdseries, notebraces={(}{)},bodyfont=\normalfont,postheadspace=0.5em]{basicstyle}
\declaretheoremstyle[headfont=\normalsize\normalfont\bfseries,notefont=\mdseries, notebraces={(}{)},bodyfont=\normalfont\itshape,postheadspace=0.5em]{italstyle}

\declaretheorem[name=Definition,style=basicstyle]{defn}
\declaretheorem[name=Remark,style=basicstyle,sibling=defn]{remark}

\declaretheorem[style=italstyle,name=Theorem]{theorem}
\declaretheorem[style=italstyle,name=Corollary,sibling=theorem]{cor}

\declaretheorem[style=italstyle,name=Proposition,sibling=theorem]{prop}

\declaretheorem[style=italstyle,name=Lemma,sibling=theorem]{lemma}

\renewenvironment{proof}{\preproof}{\endpreproof}
\newenvironment{dmatrix}{\left[\,\begin{matrix}}{\end{matrix}\,\right]}

\makeatletter
%
\newcommand{\abs}[1]{\left|#1\right|}
\newcommand{\bd}{\partial}

\newcommand{\C}{\mathbb{C}}
\renewcommand{\d}{\mathrm{d}}
\newcommand{\id}{\mathrm{id}}
\newcommand{\intprod}{\mathbin{{\tikz{\draw(-0.1,0)--(0.1,0)--(0.1,0.2)}\hspace{0.5mm}}}}

\newcommand{\R}{\mathbb{R}}
\newcommand{\set}[1]{\left\{#1\right\}}

\makeatother

\parindent=0pt
\parskip=4pt
\title{Contactomorphisms of the sphere without translated points}
\author{Dylan Cant}
\date{\today}
\setcounter{tocdepth}{2}
\begin{document}
\maketitle
\begin{abstract}
  We construct a contactomorphism of $(S^{2n-1},\alpha_{\mathrm{std}})$ which does not have any translated points, providing a negative answer to a conjecture posed in \cite{sandon_13}.
\end{abstract}

\section{Introduction}
\label{sec:intro}
Let $(Y^{2n-1},\alpha)$ be a contact manifold with a choice of contact form $\alpha$. Recall that this means that $\alpha$ is a $1$-form so that $\alpha\wedge \d\alpha^{n-1}$ is a volume form. A \emph{contactomorphism} is a diffeomorphism $\varphi:Y\to Y$ with the property that $\varphi^{*}\alpha=e^{g}\alpha$ for\footnote{Strictly speaking, our definition selects only the orientation preserving contactomorphisms.} some smooth function $g:Y\to \R$. The function $g$ is reasonably called the \emph{scaling factor}; indeed, we easily compute:
\begin{equation}\label{eq:volume}
  \varphi^{*}(\alpha\wedge \d\alpha^{n-1})=e^{ng}\alpha \wedge \d\alpha^{n-1},
\end{equation}
i.e., $e^{ng}$ governs the change in volume due to $\varphi$.

A choice of contact form also selects a special vector field $R$ called the \emph{Reeb field}, characterized by the equations $\alpha(R)=1$ and $\d\alpha(R,-)=0$.

We recall the following notion from \cite{sandon_12} and \cite{sandon_13}. Given a contactomorphism $\varphi$, a point $p\in Y$ is called a \emph{translated point} provided that $g(p)=0$ and $\varphi(p)$ lies on the Reeb flow line passing through $p$.

In \cite{sandon_13}, the author conjectures that every contactomorphism $\varphi$ isotopic to the identity of a compact contact manifold $Y$ (with any choice of form $\alpha$) has at least one translated point. The goal of the present document is to give counterexamples to this conjecture on $S^{2n-1}$ with the standard contact form $\alpha_{\mathrm{std}}$, for $n>1$. The main result we will prove is:

\begin{theorem}\label{theorem:main_result}
  Let $n>1$. There exist contactomorphisms $\varphi:S^{2n-1}\to S^{2n-1}$ isotopic to the identity which do not have translated points for the contact form $\alpha_{\mathrm{std}}$.
\end{theorem}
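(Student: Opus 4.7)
The plan is to reformulate the translated-point condition via the Hopf fibration, extract a rigidity constraint along complete Hopf orbits, and then construct $\varphi$ so as to evade this constraint. Since the Reeb flow on $(S^{2n-1},\alpha_{\mathrm{std}})$ is the periodic Hopf flow of period $\pi$, the translated-point condition is equivalent to $\pi(\varphi(p))=\pi(p)$ and $g(p)=0$, where $\pi\colon S^{2n-1}\to \C P^{n-1}$ is the Hopf projection. Write $F(\varphi) = \{p : \pi(\varphi(p))=\pi(p)\}$ for the \emph{fiber-preserving set}. The aim is to build $\varphi$ so that $g$ is nowhere zero on $F(\varphi)$.

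A useful preliminary observation is that $F(\varphi)$ cannot contain any complete Hopf circle. Indeed, if a Hopf circle $C$ lies in $F(\varphi)$, then $\varphi(C)\subseteq C$, and since an injective smooth map $S^{1}\to S^{1}$ is surjective, one has $\varphi(C)=C$. Moreover $\varphi^{*}\alpha=e^{g}\alpha$ forces $d\varphi_p(R_p) = e^{g(p)}R_{\varphi(p)}$ along $C$ (because $d\varphi_p(R_p)$ must be tangent to $\varphi(C)=C$ and its $\alpha$-value is $e^{g(p)}$), so $\varphi|_C$ is orientation-preserving. Therefore
\[
\pi = \int_C\alpha = \int_C\varphi^{*}\alpha = \int_0^{\pi} e^{g(\phi^{t}_R(p_0))}\,dt,
\]
so the mean of $e^{g}$ on $C$ equals one, and the intermediate value theorem forces a zero of $g$ on $C$. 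Hence the construction must produce $F(\varphi)$ containing no full Hopf orbit.

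My candidate $\varphi$ is the time-$1$ map of a contact Hamiltonian of the form
\[
K = \lambda\cdot(h\circ\pi) + \mu,
\]
where $h$ is a fixed Morse function on $\C P^{n-1}$ and $\mu\colon S^{2n-1}\to\R$ is a carefully chosen non-Reeb-invariant function. The summand $\lambda(h\circ\pi)$ is Reeb-invariant, so in isolation it generates a strict contactomorphism (a prequantization lift of the Hamiltonian flow of $\lambda h$) whose fiber-preserving set is exactly the union of Hopf fibers over $\mathrm{Crit}(h)$. The term $\mu$ plays a dual role: it tilts these Hopf-fiber components of $F(\varphi)$ off the Reeb foliation, so the rigidity obstruction does not apply, and it simultaneously generates a scaling factor of controlled sign along the deformed components. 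The family $\{\phi^{s}_K\}_{s\in[0,1]}$ provides the required contact isotopy from the identity to $\varphi$.

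The principal obstacle is that the scaling factor cannot be made nowhere-vanishing by a small $\mu$: to first order $g(p)\approx\epsilon(R\cdot\mu)(p)$, and the integral of $R\cdot\mu$ around any closed Reeb orbit is zero, forcing $g$ to vanish on any curve close to such an orbit. The construction must therefore be non-perturbative; the size of $\mu$ must be large enough to genuinely deform $F(\varphi)$ off the Hopf foliation, and the accumulated scaling factor $g(p)=\int_0^1(R\cdot K)(\phi^{s}_K(p))\,ds$ must be arranged to have definite sign at every $p\in F(\varphi)$. The key technical step is the explicit design of $\mu$, which I would carry out by a local analysis in a tubular neighbourhood of each Hopf fiber over a critical point of $h$, exploiting the fact that for $n>1$ the ambient dimension $2n-1\geq 3$ leaves enough room for the transversality and sign conditions to be satisfied simultaneously.
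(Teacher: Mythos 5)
Your reduction of the translated-point condition to the Hopf projection, and your observation that the fiber-preserving set $F(\varphi)$ cannot contain a full Hopf circle on which $g$ is nowhere zero, are both correct (modulo the Hopf period being $2\pi$ rather than $\pi$). But the proposal stops exactly where the proof has to begin: the contactomorphism is never constructed. The existence claim rests entirely on ``the explicit design of $\mu$,'' which is deferred to an unspecified local analysis in a tubular neighbourhood; no candidate $\mu$ is written down, and no argument is given that locates the deformed set $F(\varphi)$, let alone that $g$ has a definite sign on all of it. Your own first-order computation shows why this is the genuinely hard point: $R\cdot K$ has zero mean along every Hopf fiber, so any construction must be non-perturbative and must control the position of $\{g=0\}$ relative to $F(\varphi)$ globally, and nothing in the outline supplies a mechanism forcing these two sets apart for the time-$1$ map of a Hamiltonian of the form $\lambda(h\circ\pi)+\mu$. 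What you have is a correct necessary condition plus an ansatz, not a proof of existence.

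For comparison, the paper takes a different and more robust route. It uses a hyperbolic element of $PU(n,1)$ acting on $S^{2n-1}$ (an explicit fractional-linear map, isotopic to the identity) with one repelling and one attracting fixed point $p,q$ on different Hopf fibers, $g(p)>0$, $g(q)<0$, and north--south dynamics. Using $g_{n}=g+g\circ\varphi+\dots+g\circ\varphi_{n-1}$, a short estimate shows that for large $n$ the set $\Sigma_{n}=\{g_{n}=0\}$ lies in a small neighbourhood of $p$ while $\varphi_{n}(\Sigma_{n})$ lies in a small neighbourhood of $q$; since no Reeb orbit joins these neighbourhoods, the iterate $\varphi_{n}$ has no translated points. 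The ``largeness'' that your perturbative obstruction demands is produced there simply by iterating, with no pointwise control of the scaling factor on $F(\varphi)$ required. To salvage your approach you would need to exhibit $\mu$ concretely and verify the sign condition on all of $F(\varphi)$; as written, that step is a genuine gap.
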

The proof is given in \S\ref{sec:proof} and \S\ref{sec:construction} below. We recall the definition of standard contact form $\alpha_{\mathrm{std}}$ in \S\ref{sec:sphere}.

\begin{remark}
  For the case $Y=(S^{1},\alpha)$, the identity $\int \varphi^{*}\alpha=\int \alpha$ implies the existence of at least two points satisfying $g=0$, and every point on $Y$ can be joined to any other by the Reeb flow. Thus every contactomorphism of $S^{1}$ has translated points (for any contact form).
\end{remark}

\begin{remark}
  Sandon's conjecture has been proved in multiple cases. In \cite{AFM15}, \cite{meiwes_naef}, it is shown if the contact form $\alpha$ is \emph{hypertight} in the sense that it admits no contractible Reeb orbits. In \cite{shelukhin_contactomorphism} (using \cite{leaf_wise_albers_frauenfelder}'s work on leaf-wise intersection points), and \cite{oh_legendrian_entanglement}, \cite{oh_shelukhin_2}, the existence of translated points is proved under a smallness assumption on the oscillation norm $\int_{0}^{1} \mathrm{max}(H_{t})-\mathrm{min}(H_{t})\d t$.\footnote{Here $H_{t}=\alpha(X_{t})$ is the contact Hamiltonian associated to the infinitesimal generator $X_{t}$ of a path of contactomorphisms $\varphi_{t}$} In \cite{albers_merry}, the authors prove Sandon's conjecture for the boundary of Liouville domains $X$ with non-vanishing Rabinowitz Floer homology, and \cite{merry_ulja} proves the conjecture when the symplectic homology of $X$ is infinite dimensional. The papers \cite{sandon_13}, \cite{gkps}, \cite{allais_lens} establish versions of Sandon's conjecture for (non-spherical) lens spaces ($\mathbb{RP}^{2n-1}$ is a special case), using a generating function approach and ideas from \cite{theret_rotation}, \cite{givental_quasimorphism}. This is further supported by the work of \cite{albers_kang}, where Rabinowitz Floer homology groups of lens spaces are shown to be non-zero. The work of \cite{allais_zoll} establishes Sandon's conjecture for certain unit cotangent bundles using a variational approach.
\end{remark}

\begin{remark}
  Sandon's work \cite{sandon_13} claimed to show that every contactomorphism on $(S^{2n-1},\alpha_{\mathrm{std}})$ had a translated point. However, \cite{gootjes_dreesbach} has clarified the situation by pointing out a gap in the original argument, and he gives a detailed proof of a restricted statement, see \cite[Theorem 1.2]{gootjes_dreesbach}. The author wishes to thank Sandon for introducing him to \cite{gootjes_dreesbach}.
\end{remark}

\begin{remark}\label{remark:special_case}
  Sandon's conjecture is straightforward to verify in the case when $\varphi_{t}$ is an autonomous family of contactomorphisms generated by a Hamiltonian $H$ which is constant on Reeb flow lines (i.e., $R\intprod \d H=0$). In this case, any critical point of $H$ is a translated point of $\varphi_{t}$, for all $t$.  
\end{remark}

\subsubsection*{Acknowledgements}
\label{sec:ack}
I want to thank Egor Shelukhin for introducing me to the problem of translated points, and for suggesting many improvements to the argument. This work was completed at the University of Montreal with funding from the CIRGET research group.

\section{The standard contact form on the sphere}
\label{sec:sphere}
Consider $S^{2n-1}$ as the unit sphere in $\R^{2n}$, and recall that
\begin{equation*}
  \alpha_{\mathrm{std}}=\sum_{i=1}^{n}(x_{i}\d y_{i}-y_{i}\d x_{i})
\end{equation*}
defines the standard contact form on $S^{2n-1}$. It is readily checkable that
\begin{equation*}
  R=\sum_{i=1}^{n}x_{i}\bd_{y_{i}}-y_{i}\bd_{x_{i}}=J\sum_{i=1}^{n}(x_{i}\bd_{x_{i}}+y_{i}\bd_{y_{i}})
\end{equation*}
defines the Reeb flow. In particular, flow lines are given by $z_{i}(t)=e^{it}z_{i}(0)$, i.e., the orbits of the Reeb vector field are the fibers of the Hopf fibration.

\section{Proof of the main result}
\label{sec:proof}

Here is the sketch of the argument proving Theorem \ref{theorem:main_result}. First observe that if $p,q$ are points so that $q$ does not lie on the flow line through $p$, then we can find open sets $U_{p},U_{q}$ so that \emph{no flow line passes through $U_{p}$ and $U_{q}$}. Indeed, this follows from the Hausdorffness of the space of flow lines $CP^{n-1}=S^{2n-1}/S^{1}$ (this is a rather special property of the standard contact form).

Introduce the following notation: given a contactomorphism $\varphi$ with scaling factor $g$, let $\Sigma_{\varphi}$ be the set $\set{g=0}$. The existence of a translated point implies the existence of a Reeb flow line joining $\Sigma_{\varphi}$ and $\varphi(\Sigma_{\varphi})$.

Our strategy is simple: construct $\varphi$ so that $\Sigma_{\varphi}\subset U_{p}$ while $\varphi(\Sigma_{\varphi})
\subset U_{q}$; clearly $\varphi$ will have no translated points.

\begin{defn}\label{defn:focal}
  For the purposes of the argument, let us say that a contactomorphism of a compact manifold $\varphi:Y\to Y$ has the \emph{focal property} for $(p,q)$ provided the following hold:
  \begin{enumerate}
  \item $p,q$ are fixed points of $\varphi$, $(\varphi^{*}\alpha)_{p} > \alpha_{p}$, and $(\varphi^{*}\alpha)_{q}<\alpha_{q}$,
  \item $q$ has arbitrarily small neighbourhoods $U$ satisfying $\varphi(U)\subset U$ (attracting).
  \item denoting $\varphi_{n}=\varphi\circ\dots\circ \varphi$, if $z\ne p$, then $\lim_{n\to\infty}\varphi_{n}(z)=q$.
  \end{enumerate}
  Notice that (iii) implies that any compact set $K$ disjoint from $p$ will eventually be mapped into arbitrarily small open sets around $q$.
\end{defn}
\begin{lemma}\label{lemma:technical_1}
  Let $\varphi:Y\to Y$ be a contactomorphism satisfying the focal property for $(p,q)$. Denote by $\Sigma_{n}$ the set of points $z$ so that $((\varphi_{n})^{*}\alpha)_{z}=\alpha_{z}$. Then $$\lim_{n\to\infty}\mathrm{dist}(\Sigma_{n},p)+\mathrm{dist}(\varphi_{n}(\Sigma_{n}),q)=0,$$ i.e., $\Sigma_{n}$ eventually enters arbitrarily small neighbourhoods of $p$ and $\varphi_{n}(\Sigma_{n})$ eventually enters arbitarily small neighbourhoods of $q$.
\end{lemma}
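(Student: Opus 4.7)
The plan is to work with the cocycle identity for the scaling factors: if $g$ is the scaling factor of $\varphi$ and $g_n$ is that of $\varphi_n=\varphi\circ\dots\circ\varphi$, then an easy induction using $\varphi_n^{*}\alpha=\varphi^{*}(\varphi_{n-1}^{*}\alpha)$ yields
\[
g_n(z)=\sum_{j=0}^{n-1} g(\varphi^{j}(z)),
\]
so $\Sigma_n=\set{g_n=0}$. Condition (i) gives $g(p)>0$ and $g(q)<0$. Fix small open neighbourhoods $V_p$ of $p$ (on which $g>g(p)/2$, and disjoint from $q$) and $U_q$ of $q$ (attracting per (ii), and on which $g<g(q)/2$). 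The key uniform input is this: the entry-time function $E(z)=\min\set{j\geq 0:\varphi^{j}(z)\in U_q}$ is upper semicontinuous, finite everywhere off $p$ by (iii), and hence attains a finite maximum $T_{\ast}$ on the compact set $Y\setminus V_p$.

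For the first claim, let $z\in Y\setminus V_p$. By definition of $T_{\ast}$ and the attracting property of $U_q$, the orbit $\varphi^{j}(z)$ lies in $U_q$, where $g<g(q)/2$, for all $j\geq T_{\ast}$. Splitting the cocycle sum at $j=T_{\ast}$ gives
\[
g_n(z)\leq T_{\ast}\cdot \max|g|+(n-T_{\ast})\cdot g(q)/2\to -\infty,
\]
uniformly in $z\in Y\setminus V_p$. Hence for $n$ sufficiently large $g_n<0$ throughout $Y\setminus V_p$, forcing $\Sigma_n\subset V_p$. Since $V_p$ was arbitrary, $\mathrm{dist}(\Sigma_n,p)\to 0$.

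For the second claim I would argue by contradiction: suppose $z_n\in\Sigma_n$ with $w_n:=\varphi_n(z_n)$ staying outside some fixed neighbourhood of $q$. Passing to a subsequence, $w_n\to w^{\ast}\neq q$, and by the first claim $z_n\to p$. Shrink $U_q$ so that $w^{\ast}\notin \overline{U_q}$; then $w_n\notin U_q$ for large $n$, so by the attracting property of $U_q$ the entire orbit $\varphi^{j}(z_n)$, $j=0,\dots,n$, avoids $U_q$ (otherwise the orbit would be trapped in $U_q$ at time $n$). Let $\tau_n$ be the first exit time of this orbit from $V_p$. Since $z_n\to p$ and $p$ is a fixed point, continuity of the iterates forces $\tau_n\to\infty$. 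But the exit point $\varphi^{\tau_n}(z_n)$ lies in $Y\setminus V_p$, whose entry time into $U_q$ is at most $T_{\ast}$; so the orbit of $z_n$ would reach $U_q$ by time $\tau_n+T_{\ast}$, and the avoidance forces $\tau_n>n-T_{\ast}$. Feeding this into the cocycle,
\[
g_n(z_n)\geq \tau_n\cdot g(p)/2 - (n-\tau_n)\max|g| > (n-T_{\ast})g(p)/2 - T_{\ast}\max|g| \to +\infty,
\]
contradicting $g_n(z_n)=0$.

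The main technical point is the final estimate, where the asymmetry of the focal property (only $q$, not $p$, is assumed to have forward-attracting neighbourhoods) is felt. Rather than trying to dualise the first claim by passing to $\varphi^{-1}$---which would need an additional hypothesis about backward dynamics near $p$---one plays the large positive contribution from the long sojourn of the orbit near $p$ against the bounded contribution from the short tail in $Y\setminus V_p$, using the uniform entry-time bound $T_{\ast}$ to keep that tail under control.
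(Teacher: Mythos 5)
Your proof is correct and is essentially the paper's argument: the same cocycle splitting of $g_{n}$, the same uniform entry-time constant (your $T_{*}$ is the paper's $N$ with $\varphi_{N}(Y\setminus U_{p})\subset U_{q}$), and the same bounds $g>g(p)/2$ near $p$, $g<g(q)/2$ near $q$, $|g|\le M$ elsewhere. Your second half is just the contrapositive of the paper's: the paper bounds the first exit time from the neighbourhood of $p$ above using $g_{n}=0$ and concludes $\varphi_{n}(z)\in U_{q}$, while you bound it below assuming $\varphi_{n}(z_{n})$ avoids $U_{q}$ and contradict $g_{n}(z_{n})=0$ (the subsequence/limit point $w^{*}$ is an unnecessary but harmless detour).
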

\begin{proof}
  Bear in mind that $Y$ is assumed compact. Let $\Sigma_{n}=\set{z\in Y:(\varphi_{n}^{*}\alpha)_{z}=\alpha_{z}}$, and let $U_{p},U_{q}$ be arbitrary (small) open sets around $p,q$ respectively. It suffices to show that $\Sigma_{n}\subset U_{p}$ and $\varphi_{n}(\Sigma_{n})\subset U_{q}$ for $n$ sufficiently large. Let $g$ be the scaling factor for $\varphi$, i.e., $\varphi^{*}\alpha=e^{g}\alpha$, and let $g_{n}$ be the scaling factor for $\varphi_{n}:=\varphi\circ\dots\circ \varphi$. The focal property (i) implies that $g(p)>0$ and $g(q)<0$. A straightforward computation establishes that:
  \begin{equation}
    \label{eq:scaling_iterate}
    g_{n}=g+g\circ \varphi+\dots+g\circ \varphi_{n-1}.
  \end{equation}
  It is clear that $g$ is a bounded function, so pick some $M>\sup_{z}\abs{g(z)}$.

  Shrinking $U_{p},U_{q}$ if necessary, and using the focal properties for $\varphi$, we may suppose that:
  \begin{enumerate}[label=(\alph*)]
  \item $\varphi(U_{q})\subset U_{q}$,
  \item $g>\delta$ on $U_{p}$ and $g<-\delta$ on $U_{q}$ for some $\delta>0$,
  \item $\varphi_{N}(Y-U_{p})\subset U_{q}$ for some $N\in \mathbb{N}$.
  \end{enumerate}
  We will refer to the constants $N,M,\delta$ in the subsequent arguments.

  Suppose that $z\in \Sigma_{n}$, and $z\not\in U_{p}$. Then $\varphi_{k}(z)\in U_{q}$ for all $k\ge N$. In particular, $g(\varphi_{k}(z))<-\delta$ for $k\ge N$. We then estimate:
  \begin{equation*}
    0=g_{n}(z)=\underbrace{g(z)+\dots+g(\varphi_{N-1}(z))}_{N\text{ terms}}+\underbrace{g(\varphi_{N}(z))+\dots+g(\varphi_{n-1}(z))}_{n-N\text{ terms}}<NM-(n-N)\delta.
  \end{equation*}
  Thus for $n$ sufficiently large we have a contradiction. Thus $\Sigma_{n}\subset U_{p}$, eventually.

  On the other hand,\footnote{To prove that $\varphi_{n}(\Sigma_{n})\in U_{q}$ for $n$ sufficiently large, we can also observe that $\varphi^{-1}$ has the focal property for $(q,p)$, and that:
    \begin{equation*}
      ((\varphi_{n}^{-1})^{*}\alpha)_{\varphi_{n}(z)}=\alpha_{\varphi_{n}(z)}\iff \alpha_{z}=(\varphi_{n}^{*}\alpha)_{z},
    \end{equation*}
    i.e., $\varphi_{n}(\Sigma_{n})=\set{\varphi_{n}^{-1}\alpha=\alpha}$. Thus the second part of the proof is a consequence the first half. It is not hard to show that $\varphi^{-1}$ has the focal property for $q,p$, although one needs to come up with a slightly clever choice of neighbourhood basis at $p$ to establish property (ii) for the inverse.
  } suppose that $z\in \Sigma_{n}$ but $\varphi_{n}(z)\not\in U_{q}$. Thanks to the previous part, we may assume that $z\in U_{p}$. Let $k$ be the smallest integer so that $\varphi_{k}(z)\not\in U_{p}$. We then estimate:
  \begin{equation*}
    0=\underbrace{g(z)+g(\varphi(z))+\dots+g(\varphi_{k-1}(z))}_{k\text{ terms}}+\underbrace{g(\varphi_{k}(z))+\dots+g(\varphi_{n-1}(z))}_{n-k\text{ terms}}>k\delta-(n-k)M.
  \end{equation*}
  Rearranging yields:
  \begin{equation*}
    k<\frac{n}{\delta/M+1}
  \end{equation*}
  For $n$ sufficiently large (depending only on $\delta,M$, and not on $z$), we have
  \begin{equation*}
    \frac{n}{\delta/M+1}\le n-N,
  \end{equation*}
  Thus $k<n-N$, and hence $\varphi_{k}(z)\not\in U_{p}$ implies $\varphi_{k+N}(z)\in U_{q}$ and hence $\varphi_{n}(z)\in U_{q}$. Thus, since $z$ was arbitrary, $\varphi_{n}(\Sigma_{n})\subset U_{q}$, as desired.
\end{proof}
\begin{cor}\label{cor:cor}
  If there exists a contactomorphism $\varphi:S^{2n-1}\to S^{2n-1}$ which has the focal property for $(p,q)$, and $q$ does not lie on the Hopf circle through $p$, then a sufficiently large iterate of $\varphi$ will have no translated points.
\end{cor}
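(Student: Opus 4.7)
The plan is to combine Lemma \ref{lemma:technical_1} with the Hausdorffness of the Hopf quotient $S^{2n-1}/S^{1}=CP^{n-1}$ already highlighted in the sketch at the beginning of \S\ref{sec:proof}. A translated point of an iterate $\varphi_{n}$ is by definition a zero of its scaling factor $g_{n}$ (equivalently, a point of $\Sigma_{n}$) whose image under $\varphi_{n}$ lies on the same Reeb fiber. So it suffices to find $n$ large enough that no Reeb orbit meets both $\Sigma_{n}$ and $\varphi_{n}(\Sigma_{n})$.

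First I would produce the separating neighbourhoods. Since $q$ is not on the Hopf circle through $p$, the images $[p],[q]\in CP^{n-1}$ are distinct, and Hausdorffness of $CP^{n-1}$ yields disjoint open sets around $[p]$ and $[q]$. Pulling back to $S^{2n-1}$ gives disjoint $S^{1}$-invariant open sets $V_{p}\ni p$ and $V_{q}\ni q$ with the property that no Hopf circle enters both; I then choose arbitrary open neighbourhoods $U_{p}\subset V_{p}$ and $U_{q}\subset V_{q}$ of $p$ and $q$. By construction, no Reeb orbit of $\alpha_{\mathrm{std}}$ intersects both $U_{p}$ and $U_{q}$.

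Next I would invoke Lemma \ref{lemma:technical_1}: there is an $n_{0}$ so that for every $n\ge n_{0}$ we have $\Sigma_{n}\subset U_{p}$ and $\varphi_{n}(\Sigma_{n})\subset U_{q}$. Fix any such $n$ and suppose, for contradiction, that $\varphi_{n}$ has a translated point $z$. Then $g_{n}(z)=0$, so $z\in \Sigma_{n}\subset U_{p}$, while $\varphi_{n}(z)\in U_{q}$ lies on the Reeb orbit through $z$. This produces a Reeb orbit passing through both $U_{p}$ and $U_{q}$, contradicting the choice of these sets. Hence $\varphi_{n}$ has no translated points.

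The only step that required any genuine input is the existence of the separating neighbourhoods $U_{p},U_{q}$; this is the one place where the special structure of $\alpha_{\mathrm{std}}$ (all Reeb orbits closed with smooth Hausdorff quotient) enters. Once that is in hand, Lemma \ref{lemma:technical_1} does the remaining work essentially for free.
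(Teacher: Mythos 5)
Your proposal is correct and follows the paper's own proof essentially verbatim: disjoint saturated neighbourhoods from the Hausdorffness of $CP^{n-1}$, then Lemma \ref{lemma:technical_1} to trap $\Sigma_{n}$ and $\varphi_{n}(\Sigma_{n})$ in them, then the definition of translated point to conclude. Nothing further is needed.
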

\begin{proof}
  As explained above, we can find open sets $U_{p},U_{q}$ around $p,q$, respectively, so that no Reeb orbit passes through $U_{p}$ and $U_{q}$. Since $\varphi$ has the focal property, Lemma \ref{lemma:technical_1} guarantees that eventually $\Sigma_{n}\subset U_{p}$ and $\varphi_{n}(\Sigma_{n})\subset U_{q}$. Thus there are no flow lines joining $\Sigma_{n}$ to $\varphi_{n}(\Sigma_{n})$, so the iterate $\varphi_{n}$ has no translated points, as desired.
\end{proof}
Therefore, in order to prove Theorem \ref{theorem:main_result}, it suffices to construct a contactomorphism isotopic to the identity which satisfies the focal property for $p,q$ with $q$ disjoint from the Reeb orbit through $p$. We perform this construction in the next section.

\section{Constructing contactomorphisms with the focal property}
\label{sec:construction}
First we observe that the focal property is preserved under conjugation:
\begin{lemma}
  If $\varphi$ has the focal property for $(p,q)$, and $\sigma$ is any contactomorphism, then $\sigma\circ\varphi\circ\sigma^{-1}$ has the focal property for $(\sigma(p),\sigma(q))$. 
\end{lemma}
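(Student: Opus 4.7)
The plan is to verify each of the three clauses of the focal property for $\psi := \sigma \circ \varphi \circ \sigma^{-1}$ at the pair $(\sigma(p),\sigma(q))$ by pushing everything through $\sigma$. All three parts reduce to routine naturality, but clause (i) is the only one involving a nontrivial computation because the scaling factor does not transform by pullback alone.

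First I would handle (i). The points $\sigma(p),\sigma(q)$ are fixed by $\psi$ since $\psi(\sigma(p)) = \sigma(\varphi(p)) = \sigma(p)$, and likewise for $q$. To check the pointwise inequalities I would compute the scaling factor $\tilde g$ of $\psi$ in terms of the scaling factors $g$ of $\varphi$ and $h$ of $\sigma$. Using $\sigma^{*}\alpha = e^{h}\alpha$ and $(\sigma^{-1})^{*}\alpha = e^{-h\circ \sigma^{-1}}\alpha$, a direct calculation yields
\begin{equation*}
  \psi^{*}\alpha = e^{\tilde g}\alpha, \qquad \tilde g = \bigl(g + h\circ \varphi - h\bigr)\circ \sigma^{-1}.
\end{equation*}
Evaluating at the fixed point $\sigma(p)$ the terms $h\circ\varphi$ and $h$ agree, so $\tilde g(\sigma(p)) = g(p) > 0$, and similarly $\tilde g(\sigma(q)) = g(q) < 0$. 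This is equivalent to $(\psi^{*}\alpha)_{\sigma(p)} > \alpha_{\sigma(p)}$ and $(\psi^{*}\alpha)_{\sigma(q)} < \alpha_{\sigma(q)}$.

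For (ii), given an attracting neighbourhood basis $\{U\}$ of $q$ for $\varphi$, I would take $\{\sigma(U)\}$ as a neighbourhood basis of $\sigma(q)$. Since $\sigma$ is a homeomorphism and $\psi(\sigma(U)) = \sigma(\varphi(U)) \subset \sigma(U)$, the attracting property transfers. For (iii), the identity $\psi_{n} = \sigma \circ \varphi_{n} \circ \sigma^{-1}$ together with the continuity of $\sigma$ shows that for any $w \ne \sigma(p)$ we have $\sigma^{-1}(w) \ne p$, hence $\varphi_{n}(\sigma^{-1}(w)) \to q$ and thus $\psi_{n}(w) \to \sigma(q)$.

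The only potential obstacle is the formula for $\tilde g$, which requires keeping track of cocycle terms from conjugation by $\sigma$; once one observes that these terms cancel at any fixed point of $\varphi$, the rest of the argument is immediate from naturality.
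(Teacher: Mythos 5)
Your proposal is correct and follows the same route as the paper: you derive the identical scaling-factor formula $\tilde g = (g + h\circ\varphi - h)\circ\sigma^{-1}$ for the conjugate, observe the cancellation of the $h$-terms at the fixed points to get clause (i), and transfer (ii) and (iii) through $\sigma$ by naturality, which is exactly what the paper's proof does (it merely leaves (ii) and (iii) to the reader). No gaps; your write-up is simply a more detailed version of the paper's argument.
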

\begin{proof}
  Let $h$ be the scaling factor for $\sigma$ and $g$ the scaling factor for $\varphi$. Then, the scaling factor of $\sigma\circ \varphi\circ \sigma^{-1}$ equals $h\circ \varphi\circ \sigma^{-1}+g\circ \sigma^{-1}-h\circ \sigma^{-1}$. Using this formula, and the fact that $p,q$ are fixed points for $\varphi$, focal property (i) with $(\sigma(p),\sigma(q))$ is easily established for the conjugated contactomorphism. The focal properties (ii) and (iii) are straightforward to check, and are left to the reader.
\end{proof}

Now let $p,q$ be two points on $S^{2n-1}$, so that $q$ is not on the Reeb flow line through $p$ (this forces $n>1$). Since the contactomorphism group acts 2-transitively, the existence of a focal contactomorphism for any other pair $(P,Q)$ implies the existence of a focal contactomorphism for $(p,q)$. The following explicit formula proves the existence of a focal contactomorphism for a specific pair $(P,Q)$.
\begin{prop}[see Remark 8.2 in \cite{ekp}]\label{prop:explicit}
  Let $a\in (0,1)$ and for $(z_{1},\dots,z_{n})\in \C^{n}$ consider the mapping:
  \begin{equation*}
    \varphi(z)=(\frac{(1+a^{2})z_{1}+(1-a^{2})}{(1-a^{2})z_{1}+(1+a^{2})},\frac{2a z_{2}}{(1-a^{2})z_{1}+(1+a^{2})},\dots,\frac{2a z_{n}}{(1-a^{2})z_{1}+(1+a^{2})}).
  \end{equation*}
  Then $\varphi(S^{2n-1})\subset S^{2n-1}$, and $\varphi$ induces a contactomorphism $S^{2n-1}\to S^{2n-1}$, isotopic to the identity, which is focal for $P=(-1,0,\dots,0)$ and $Q=(1,0,\dots,0)$.
\end{prop}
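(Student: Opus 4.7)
My approach exploits the fact that $\varphi$ is the restriction to $S^{2n-1}$ of a holomorphic map on a neighbourhood of the closed unit ball in $\C^n$. Indeed, the pole of the denominator $D(z) := (1-a^2)z_1 + (1+a^2)$ lies at $z_1 = -(1+a^2)/(1-a^2)$, which has modulus strictly greater than $1$, so $\varphi$ is biholomorphic near $\overline{B}^{2n}$. This holomorphic extension drives the proof of both sphere preservation and the contactomorphism property.

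The pivotal step is the algebraic identity
\begin{equation*}
  \abs{\varphi(z)}^2 - 1 = \frac{4a^2(\abs{z}^2 - 1)}{\abs{D(z)}^2},
\end{equation*}
obtained by straightforward expansion. Setting $\rho = \abs{z}^2 - 1$, this reads $\varphi^*\rho = (4a^2/\abs{D}^2)\,\rho$, which immediately gives $\varphi(S^{2n-1}) \subset S^{2n-1}$. For the contact property, I would recall that $\alpha_{\mathrm{std}} = \tfrac{1}{2} d^c \rho$ on $\C^n$, where $d^c := i(\bar\partial - \partial)$. Since $\varphi$ is holomorphic, $\varphi^* d^c = d^c \varphi^*$, and expanding $\varphi^* \alpha_{\mathrm{std}} = \tfrac{1}{2} d^c(\varphi^* \rho)$ by Leibniz produces two terms; the one containing $\rho$ vanishes on restriction to $\set{\rho = 0} = S^{2n-1}$, leaving
\begin{equation*}
  \varphi^* \alpha_{\mathrm{std}}\big|_{TS^{2n-1}} = \frac{4a^2}{\abs{D(z)}^2}\,\alpha_{\mathrm{std}},
\end{equation*}
i.e., $\varphi$ is a contactomorphism with explicit scaling factor $e^g = 4a^2/\abs{D}^2$.

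The isotopy to the identity is furnished by the smooth family $\set{\varphi_a}_{a \in (0,1]}$, each member a contactomorphism by the same argument, with $\varphi_1 = \id$; bijectivity of each $\varphi_a$ on $S^{2n-1}$ can be verified by checking the group law $\varphi_a \circ \varphi_b = \varphi_{ab}$ (a routine expansion), which exhibits $\varphi_{1/a}$ as inverse. For the focal property, direct substitution shows $\varphi(P) = P$ and $\varphi(Q) = Q$, while the scaling factor takes the values $1/a^2 > 1$ at $P$ and $a^2 < 1$ at $Q$, establishing (i). The complex Jacobian of $\varphi$ at $Q$ is diagonal with entries $(a^2, a, \ldots, a)$, all of modulus less than $1$, so $Q$ is a hyperbolic attracting fixed point; standard linearization then supplies arbitrarily small forward-invariant neighbourhoods, giving (ii). For the global convergence (iii), I exploit that the first coordinate of $\varphi$ depends only on $z_1$: it is a hyperbolic Möbius transformation of $CP^1$ fixing $\pm 1$ with derivatives $a^2$ at $1$ and $1/a^2$ at $-1$, so $z_1^{(m)} \to 1$ for every $z_1 \ne -1$. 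Feeding this into the recursion $z_k^{(m+1)} = 2a\, z_k^{(m)} / D(z^{(m)})$ for $k \ge 2$ makes the multiplier $2a/\abs{D(z^{(m)})}$ tend to $a < 1$, so the higher coordinates decay geometrically; combined with $\abs{z^{(m)}}^2 = 1$, this yields $\varphi_n(z) \to Q$. The remaining case $z_1 = -1$ on the sphere forces $z = P$.

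The principal difficulty is really only the bookkeeping for the initial identity for $\abs{\varphi(z)}^2 - 1$; once that is in hand, the contactomorphism property is essentially automatic from the $d^c$-formalism, the group law is a similar expansion, and the focal analysis reduces to fixed-point linearization together with the decoupled first-coordinate dynamics. The most subtle point is verifying (iii) globally for every $z \ne P$, but the decoupling of the Möbius dynamics in $z_1$ from the remaining coordinates makes this essentially immediate.
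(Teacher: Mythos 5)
Your proof is correct, but it takes a genuinely different route from the paper. The paper recognizes $\varphi$ as the projectivized action of an explicit matrix $M_{a}\in U(n,1)$ on the boundary of the complex hyperbolic ball: the contactomorphism property then comes for free from the fact that biholomorphisms of the ball extend to the sphere and preserve the distribution of complex tangencies, and the isotopy to the identity comes from connectedness of $U(n,1)$; the focal properties are read off from the fixed points and the derivatives $\d\varphi_{P},\d\varphi_{Q}$, with (iii) argued by asserting that limit points of orbits lie in the fixed-point set. You instead work entirely by hand: the identity $\abs{\varphi(z)}^{2}-1=4a^{2}(\abs{z}^{2}-1)/\abs{D}^{2}$, the potential-theoretic description $\alpha_{\mathrm{std}}=\tfrac12 d^{c}\rho$ together with holomorphy of $\varphi$ to extract the explicit conformal factor $4a^{2}/\abs{D}^{2}$, the group law $\varphi_{a}\circ\varphi_{b}=\varphi_{ab}$ for invertibility and the isotopy $\set{\varphi_{s}}_{s\in[a,1]}$, and a decoupled analysis of the M\"obius dynamics in $z_{1}$ for (iii). (Your group law is of course the $U(n,1)$ one-parameter subgroup in disguise, but you never need that structure.) What each approach buys: the paper's argument is conceptually shorter and explains why such maps exist (any element of $PU(n,1)$ works, isotopy is automatic), while yours is self-contained, produces the scaling factor explicitly --- making property (i) and the values $1/a^{2}$ at $P$, $a^{2}$ at $Q$ immediate --- and gives a more airtight proof of (iii), since the paper's claim that all limit points of an orbit are fixed points is left unjustified there, whereas your argument via the autonomous hyperbolic M\"obius dynamics of $z_{1}$ plus geometric decay of the remaining coordinates needs no such assertion. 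Two small remarks: ``biholomorphic near $\overline{B}$'' should just be ``holomorphic'' until injectivity is known from the group law; and for (ii) you do not need any linearization theorem (indeed holomorphic linearization at $Q$ would meet the resonance $a^{2}=a\cdot a$) --- since all eigenvalues of $\d\varphi_{Q}$ have modulus less than one, an adapted norm already makes small balls forward-invariant, which is all that is required.
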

\begin{proof}
  The $(n+1)\times(n+1)$ matrix:
  \begin{equation*}
    M_{a}=\frac{1}{2a}\begin{dmatrix}
      {1+a^{2}}&{1-a^{2}}&{0}\\
      {1-a^{2}}&{1+a^{2}}&{0}\\
      {0}&{0}&{2a1_{(n-1)\times (n-1)}}
    \end{dmatrix}
  \end{equation*}
  acts on $\C^{n+1}$ and preserves the quadratic form $q=-u_{0}\bar{u}_{0}+u_{1}\bar{u}_{1}+\dots+u_{n}\bar{u}_{n}$, i.e., $M_{a}$ lies in the group $U(n,1)$. Projectivizing via $z_{i}=u_{i}/u_{0}$, we see that the quotient group $PU(n,1)=U(n,1)/S^{1}$ acts by biholomorphisms of the unit ball in $\C^{n}$ (where $q<0$) and extends smoothly to the unit sphere (where $q=0$). Thus $PU(n,1)$ acts on $S^{2n-1}$ by contactomorphisms, since the contact distribution is the distribution of complex tangencies $TS^{2n-1}\cap JTS^{2n-1}$. Our formula for $\varphi$ is given by the action of $M_{a}$, and hence $\varphi$ is a contactomorphism. Moreover $U(n,1)$ (and hence $PU(n,1)$) is a connected Lie group,\footnote{Proof of connectedness: by an explicit argument, one can deform the columns of any $U(n,1)$ matrix to ensure the first column is $e_{0}$. Then the other columns form a unitary basis for $\set{0}\times \C^{n}$. Thus everything in $U(n,1)$ can be joined to an embedded copy of $U(n)$, which is connected.} and so $\varphi$ is isotopic to the identity.\footnote{One can also see directly that $\varphi\to \id$ as $a\to 1$.}
  
  It remains only to verify the focal properties. We see that $\varphi(z)=z$ if and only if $z_{1}=\pm 1$ (hence $z_{2}=\dots=z_{n}=0$), and so $P,Q$ are the only fixed points of $\varphi$. 

  The tangent space to $S^{2n-1}$ at $P,Q$ is equal to $i\R\oplus \C^{n-1}$, and, if $1_{i\R},1_{\C^{n-1}}$ denote the projections onto these subspaces (which are the characteristic line and the contact hyperplane, respectively), we can write the derivatives as:\footnote{This formula for the derivative is related to the rescaling contactomorphism $(x,y,z)\mapsto (cx,cy,c^{2}z)$. See \cite[pp.\ 1743]{ekp} for further details.}
  \begin{equation*}
    \begin{aligned}
      \d\varphi_{P}&=c_{P}^{2}1_{i\R}+c_{P}1_{\C^{n-1}}\text{ where }c_{P}=1/a>1,\\
      \d\varphi_{Q}&=c_{Q}^{2}1_{i\R}+c_{Q}1_{\C^{n-1}}\text{ where }c_{Q}=a<1.
    \end{aligned}
  \end{equation*}
  The focal property (i) follows immediately. Some basic calculus also establishes focal property (ii) (i.e., by comparing the map with its derivative). Finally, recalling that focal property (iii) states $z\ne P\implies \lim_{n}\varphi_{n}(z)=Q$, we argue as follows: the sequence $\varphi_{n}(z)$ must have its limit points contained in the fixed point set $\set{P,Q}$. Moreover, if $\varphi_{n}(z)$ has $Q$ as a limit point, then, by the attracting property, $\varphi_{n}(z)$ must converge to $Q$. Therefore if $\varphi_{n}(z)$ does not converge to $Q$ it must converge to $P$. However, since the derivative at $P$ is expanding, it is clear that $\varphi_{n}(z)\ne P$ cannot converge to $P$. This completes the proof
\end{proof}

Thus we conclude the existence of a focal contactomorphism for the chosen pair $(p,q)$; simply take the explicit formula from Proposition \ref{prop:explicit}, and conjugate by a contactomorphism\footnote{The explicit formula and the conjugation argument were explained to me by Egor Shelukhin} $\sigma$ which takes $P=(-1,0,\dots,0)$ to $p$ and $Q=(1,0,\dots,0)$ to $q$. Applying Corollary \ref{cor:cor} then completes the proof of Theorem \ref{theorem:main_result}.

\begin{remark}
  It is clear that our construction is related to the \emph{contact (non)squeezing problem} for domains in the standard sphere. Indeed, the existence of contactomorphisms with the focal property implies that large domains can be squeezed inside of arbitrarily small domains. In \cite{uljarevic}, the author proves a non-squeezing property holds for domains in certain non-standard contact spheres, namely the \emph{Ustilovsky spheres}. As part of the argument, the author shows that the Ustilovsky spheres admit Liouville fillings with infinite dimensional symplectic homology, and hence Sandon's conjecture holds in this case by the results of \cite{merry_ulja}.
\end{remark}

\bibliography{citations}
\bibliographystyle{alpha}
\end{document}